\documentclass[12pt,leqno,a4paper]{amsart}
\usepackage{amssymb}
\usepackage{xcolor}

\textwidth160mm
\oddsidemargin5mm
\evensidemargin5mm

 %% Fonts
 %%%%%%%%%%%%%%%%%%
\newcommand{\FF}{{\mathbb{F}}}
\newcommand{\QQ}{{\mathbb{Q}}}
\newcommand{\RR}{{\mathbb{R}}}
\newcommand{\ZZ}{{\mathbb{Z}}}

\newcommand{\bB}{{\mathbf{B}}}
\newcommand{\bG}{{\mathbf{G}}}
\newcommand{\bP}{{\mathbf{P}}}
\newcommand{\bT}{{\mathbf{T}}}

\newcommand{\cF}{{\mathcal{F}}}
\newcommand{\cH}{{\mathcal{H}}}

\newcommand{\fS}{{\mathfrak{S}}}

\newcommand{\rX}{{\mathrm{X}}}
%\newcommand{\rXbar}{\overline{\mathrm{X}}}

 %% Operatoren
 %%%%%%%%%%%%%%%%
\newcommand{\Ind}{{\operatorname{Ind}}}
\newcommand{\Res}{{\operatorname{Res}}}
\newcommand{\Irr}{{\operatorname{Irr}}}
\newcommand{\rk}{{\operatorname{rk}}}
\newcommand{\sgn}{{\operatorname{sgn}}}
\newcommand{\St}{{\operatorname{St}}}
\newcommand{\SL}{{\operatorname{SL}}}
\newcommand{\SU}{{\operatorname{SU}}}
\newcommand{\SO}{{\operatorname{SO}}}
\newcommand{\Sp}{{\operatorname{Sp}}}
\newcommand{\LR}{{\operatorname{LR}}}
\newcommand{\uni}{{\operatorname{uni}}}

\newcommand{\tw}[1]{{}^#1\!}
\newcommand{\Chevie}{{\sf Chevie}{}}

\let\la=\lambda

\newtheorem{thm}{Theorem}[section]
\newtheorem{lem}[thm]{Lemma}
\newtheorem{prop}[thm]{Proposition}
\newtheorem{conj}[thm]{Conjecture}

\theoremstyle{remark}
\newtheorem{rem}[thm]{Remark}
\newtheorem{exmp}[thm]{Example}

\begin{document}

\title[A positivity conjecture]{A positivity conjecture for the\\ Alvis-Curtis
  dual of the intersection cohomology\\ of a Deligne--Lusztig variety}

\date{\today}

\author{Olivier Dudas}
\address{Universit\'e Paris Diderot, UFR de Math\'ematiques,
  B\^atiment Sophie Germain, 5 rue Thomas Mann, 75205 Paris CEDEX 13, France.}
\email{dudas@math.jussieu.fr}

\author{Gunter Malle}
\address{FB Mathematik, TU Kaiserslautern, Postfach 3049,
         67653 Kaisers\-lautern, Germany.}
\email{malle@mathematik.uni-kl.de}

\thanks{The second author gratefully acknowledges financial support by ERC
  Advanced Grant 291512.}

\keywords{}

\subjclass[2010]{Primary 20C20,20C33; Secondary 20G05}

\begin{abstract}
We formulate a strong positivity conjecture on characters
afforded by the Alvis-Curtis dual of the intersection cohomology
of Deligne--Lusztig varieties. This conjecture provides a powerful
tool to determine decomposition numbers of unipotent $\ell$-blocks
of finite reductive groups.
\end{abstract}

\maketitle

%%%%%%%%%%%%%%%%%%%%%%%%%%%%%%%%%%%%%%%%%%%%%%%%%%%%%%%%%%%%%%%%%%%%%%%%%
\section{Introduction}  \label{sec:intro}

An important part of the modular representation theory of a finite group is
encoded in its decomposition matrix, which relates simple representations in
characteristic zero with those in positive characteristic. Determining such
matrices amounts to finding the characters of the indecomposable projective
representations.
\par
In the case of finite reductive groups, projective modules can be constructed
by Harish-Chandra induction. However, like in the ordinary case, many modules
do not appear in the induction from proper Levi subgroups. They correspond to
the projective covers of the so-called cuspidal modules. For these, one needs
to use Deligne--Lusztig induction instead of Harish-Chandra induction, the
problem being that the latter produces virtual modules. We conjecture that one
can obtain proper projective modules, and so overcome this problem, by
considering suitable linear combinations of these virtual modules.
\par
Let us give more details about the construction. Let $\bG$ be a connected
reductive algebraic group over an algebraically closed field of positive
characteristic with a Steinberg endomorphism $F$ making $\bG^F$ into a finite
reductive group. To any element $w$ of the Weyl group of $\bG$
one can associate a quasi-projective variety $\rX(w)$ acted on by $\bG^F$, the
Deligne--Lusztig variety. The alternating sum of the $\ell$-adic cohomology
groups of $\rX(w)$ yields a virtual character $R_w$ of $\bG^F$. Taking instead
the Alvis-Curtis dual of the intersection cohomology one obtains another
virtual character $Q_w$, which is a linear combination of the Deligne--Lusztig
characters $R_y$. The coefficients of this combination are the values at $1$
of the twisted Kazhdan--Lusztig polynomials, which also express simple objects
in terms of Verma modules in the principal block of the category
$\mathcal{O}$ of a semi-simple Lie algebra. Surprisingly, unlike the
Deligne--Lusztig characters the $Q_w$'s are no longer virtual. The purpose
of this note is to conjecture a modular analogue of that property and
to give evidence towards it (see \S\ref{sec:conjecture}).

\begin{conj}
  Assume that $\ell$ is not too small. Then up to a sign, $Q_w$ is
  the unipotent part of a projective character.
\end{conj}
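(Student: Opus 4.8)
The plan is to realise $Q_w$ geometrically with $\ell$-modular coefficients and to extract the required positivity from that realisation.

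Observe first that the conjecture is a linear statement about the character ring. The virtual character $\varepsilon_wQ_w$, with $\varepsilon_w:=(-1)^{\dim\rX(w)}$, is a $\ZZ$-combination of the unipotent characters $R_y$ and hence coincides with its own unipotent part, so the assertion is that $\varepsilon_wQ_w$ lies in the cone spanned by the unipotent parts $(\Psi_\varphi)^{\uni}$ of the ordinary characters $\Psi_\varphi$ of the projective indecomposable $\mathcal{O}\bG^F$-modules. It therefore suffices to exhibit, for $\ell$ large, a genuine projective $\mathcal{O}\bG^F$-module $P_w$ — or a bounded complex of projectives that is homotopy equivalent to a module placed in a single degree — whose ordinary character has unipotent part equal to $\varepsilon_wQ_w$.

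\emph{Geometric model.} By construction $Q_w=D_\bG\bigl(\sum_i(-1)^i[\mathbb{H}^i(\overline{\rX}(w),\mathrm{IC})]\bigr)$, where $\overline{\rX}(w)$ is the closure of $\rX(w)$ and the intersection cohomology is taken with $\overline{\QQ}_\ell$-coefficients; this is how the values at $1$ of the twisted Kazhdan--Lusztig polynomials enter. Fix a reduced word for $w$ and let $\pi\colon\widetilde{\rX}(w)\to\overline{\rX}(w)$ be the associated Bott--Samelson resolution, a smooth projective $\bG^F$-variety realised as an iterated $\mathbb{P}^1$-fibration. When $\ell$ exceeds an explicit bound — large enough that the stalks of the $\ell$-adic intersection cohomology sheaves in play carry no $\ell$-torsion and the decomposition theorem holds over $\mathcal{O}$, which in large characteristic can be read off from the parity-sheaf and torsion results for the Frobenius-twisted Schubert varieties that govern the singularities of $\overline{\rX}(w)$ — the complex $\mathrm{R}\Gamma(\overline{\rX}(w),\mathrm{IC}_\mathcal{O})$ reduces modulo $\ell$ compatibly with the characteristic-zero picture and is, in $D^b(\mathcal{O}\bG^F)$, a direct summand of $\mathrm{R}\Gamma(\widetilde{\rX}(w),\mathcal{O})$. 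Let $D_\bG$ now also denote the Cabanes--Rickard self-equivalence of $D^b(\mathcal{O}\bG^F)$ lifting Alvis--Curtis duality, given by a two-sided tilting complex.

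\emph{Positivity by induction on $\ell(w)$.} The heart of the argument would be an induction mirroring the Kazhdan--Lusztig recursion. For a simple reflection $s$ with $ws>w$, the intersection cohomology complex of $\overline{\rX}(ws)$ is obtained from that of $\overline{\rX}(w)$ by convolution with the minimal $\mathbb{P}^1$-correspondence; on $\bG^F$-modules this is a Deligne--Lusztig induction and restriction through a minimal parabolic, and the step to establish is that applying $D_\bG$ to this operation preserves the property of being, up to a sign and up to homotopy, a single projective module. The base case $w=1$ is the projectivity of $\mathcal{O}[\bG^F/\bB^F]$, which holds once $\ell\nmid|\bT^F|$, and the sign $\varepsilon_w$ is tracked through the recursion. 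Granting this, $P_w$ is the resulting module, its character has unipotent part $\varepsilon_wQ_w$ by the geometric model, and membership of the cone above is automatic. As a warm-up, and as the natural way to calibrate the bound on $\ell$, one should first settle the case where $w$ is a Coxeter element, where the cohomology of $\rX(w)$ is completely understood, and check groups of small rank directly with \Chevie.

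\emph{The main obstacle.} The crux is that the convolution operators, hence $D_\bG$ of the intersection cohomology, are under control only up to homotopy, while the conclusion needs a complex of projectives that is homotopy equivalent to a \emph{module} concentrated in one degree. Supplying this is equivalent to a modular purity statement — a hard-Lefschetz or parity-vanishing phenomenon — for the intersection cohomology of Deligne--Lusztig varieties: over $\overline{\QQ}_\ell$ it is the familiar weight argument, but over the residue field it is known only in scattered cases, and its possible failure for small $\ell$ is precisely what the hypothesis ``$\ell$ not too small'' is designed to exclude. A second, entangled difficulty is to pin down the correct bound on $\ell$: already the case $w=1$ forces one to exclude coincidences such as $\ell\mid|\bT^F|$, so the bound cannot be a function of the Weyl group alone, and making it effective requires controlling the $\ell$-torsion in the intersection cohomology of the twisted Schubert varieties attached to $w$.
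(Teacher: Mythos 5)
This statement is a \emph{conjecture}, not a theorem: the paper offers no proof, only the remark that a proof ``might rely on the geometric realization of the Alvis-Curtis duality,'' and then collects evidence (the case $w=w_0$, small-rank computations, and the $\SU_n$ case $w=s_1w_0$). Your proposal is therefore not being compared to an existing argument; it is a speculative programme in the same spirit as the authors' remark, and to your credit you flag the central unresolved point (modular purity/parity-vanishing for the intersection cohomology of $\overline{\rX(w)}$, and perfectness over $\mathcal{O}G$) rather than claiming it. As written, though, the proposal is not a proof of anything.

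Two concrete issues beyond the acknowledged gap. First, the sign is wrong: you set $\varepsilon_w=(-1)^{\dim\rX(w)}=(-1)^{\ell(w)}$, but Lusztig's Proposition (and the conjecture as stated in \S\ref{sec:conjecture}) uses $(-1)^{a(w)}$, the $a$-invariant of the two-sided cell of $w$, and these differ in general. This is not cosmetic for your strategy, since any induction on $\ell(w)$ that tracks a sign $(-1)^{\ell(w)}$ will not produce the correct $(-1)^{a(w)}$; the $a$-function is constant on two-sided cells, not monotone in length. Second, the inductive step elides the Kazhdan--Lusztig recursion: the $\mathbb{P}^1$-convolution computes $C'_sC'_w$, which equals $C'_{ws}$ only up to the correction terms $\sum\mu(y,w)C'_y$ over $y<w$ with $sy<y$. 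Geometrically, pushing forward the IC sheaf through the minimal correspondence gives $\mathrm{IC}(\overline{\rX(ws)})$ only as one summand of a larger decomposition; your induction would need to subtract the other summands while preserving ``projective module concentrated in one degree,'' which is precisely the kind of cancellation that can destroy positivity. Absent a mechanism to control these lower terms (for instance a genuine parity-sheaf/Soergel-bimodule formalism compatible with Alvis--Curtis duality and with the modular system $(K,\mathcal{O},k)$), the induction does not close.
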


We shall explain the strong implications of this conjecture to the
determination of decomposition matrices for finite reductive groups, and
give several instructive examples where this conjecture holds.
We believe that a general proof might rely on the geometric realization
of the Alvis-Curtis duality.

%%%%%%%%%%%%%%%%%%%%%%%%%%%%%%%%%%%%%%%%%%%%%%%%%%%%%%%%%%%%%%%%%%%%%%%%%
\section{Statement of the conjecture}  \label{sec:DL}

\subsection{Deligne--Lusztig theory}

Let $\bG$ be a connected reductive linear algebraic group over an algebraically
closed field of positive characteristic $p$, and $F:\bG\rightarrow\bG$ be a
Steinberg endomorphism. There exists a positive integer $\delta$ such that
$F^\delta$ defines a split $\FF_{q^\delta}$-structure on $\bG$ (with
$q \in \RR_+$), and we will choose $\delta$ minimal for this property.
We set $G:=\bG^F$, the finite group of fixed points.

We fix a pair $(\bT,\bB)$ consisting of a maximal torus contained in a Borel
subgroup of $\bG$, both of which are assumed to be $F$-stable. We denote by $W$
the Weyl group of $\bG$, and by $S$ the set of simple reflections in $W$
associated with $\bB$. Then $F^\delta$ acts trivially on $W$.
Following Lusztig \cite[17.2]{Lu86}, for any $F$-stable irreducible character
$\chi$ of $W$, one can choose a preferred extension $\widetilde \chi$ of
$\chi$ to the group $W \rtimes \langle F \rangle$ satisfying
$\widetilde \chi (x F^\delta) = \widetilde \chi(x)$ for all
$x \in W \rtimes \langle F \rangle$.
The almost character associated to $\widetilde \chi$ is then the
following uniform character
$$ R_{\widetilde \chi}
      = \frac{1}{|W|} \sum_{w\in W} \widetilde\chi(wF) R_{\bT_{wF}}^\bG (1).$$
Let $\rX(w)$ denote the Deligne--Lusztig variety associated with $w$. Then
its $\ell$-adic cohomology groups $H^i(\rX(w))$ with coefficients in
$K \supset \QQ_\ell$ give rise to the so-called \emph{unipotent}
representations of $G$.
By \cite[Thm.~4.23]{Lu84} the decomposition of almost characters in terms of
unipotent characters can be computed explicitly. Conversely, for $w \in W$ the
orthogonality relations for the Deligne--Lusztig characters yield
$$R_{\bT_{wF}}^\bG (1) = \sum_{i\in\ZZ} (-1)^i \big[H^i(\rX(w))\big]
      = \sum_{\chi \in (\Irr\, W)^F} \widetilde\chi(wF) R_{\widetilde\chi}$$
as a virtual $KG$-module.
\par
The Frobenius $F^\delta$ acts on the Deligne--Lusztig variety $\rX(w)$, making
the cohomology groups $H^i(\rX(w))$ into
$G\times \langle F^\delta\rangle$-modules. Digne and Michel \cite{DM85} have
extended the previous formula to take into account this action.
By \cite[Cor.~3.9]{Lu78}, the eigenvalues of $F^\delta$ on a unipotent
character $\rho$ in the cohomology of $\rX(w)$ are of the form $\lambda_\rho
q^{\delta m/2}$ where $\lambda_\rho$ is a root of unity which depends only on
$\rho$ and $m$ is a nonnegative integer. We fix an indeterminate $v$
and we shall denote by $v^{\delta m} \rho$ the class in the Grothendieck group
of $G\times \langle F^\delta \rangle$-modules of such a representation.
\par
Let $\cH_v(W)$ be the Iwahori--Hecke algebra of $W$ with equal parameters $v$.
By convention, the standard basis $(t_w)_{w\in W}$ of $\cH_v(W)$ will satisfy
the relation $(t_s + v)(t_s - v^{-1})=0$ for all $s \in S$. For
$\chi\in(\Irr W)^F$ we denote by
$\widetilde \chi_v$ the character of $\cH_v(W) \rtimes \langle F \rangle$
which specializes to $\widetilde \chi$ at $v =1$. We denote by $(C_w')_{w\in W}$
(resp.~$(C_w)_{w\in W}$) the Kazhdan--Lusztig basis (resp.~twisted
Kazhdan--Lusztig basis) of $\cH_v(W)$. For any simple reflection $s\in S$ we
have $C_s' = t_s+v$ and $C_s = t_s - v^{-1}$.
If $\iota$ denotes the $\ZZ[v,v^{-1}]$-linear involution on
$\cH_v(W)$ defined by $\iota(t_w) = (-1)^{\ell(w)} t_{w^{-1}}^{-1}$
then $\iota (C_w') = (-1)^{\ell(w)} C_w$.
The virtual $G\times \langle F^\delta \rangle$-modules afforded by
the intersection cohomology of Deligne--Lusztig varieties can
be computed by means of the $C_w'$ base. For the following result, see
\cite[Thm.~3.8]{Lu84}:

\begin{thm}[Lusztig]   \label{thm:dlvar}
 Let $w \in W$. The class in the Grothendieck group of
 $G\times \langle F^\delta\rangle$-modules of the intersection cohomology
 of $\rX(w)$ is given by
 $$\sum_{i \in\ZZ} (-1)^i \big[IH^i\big(\overline{\rX(w)}\big)\big]
      = (-1)^{\ell(w)}\sum_{\chi \in (\Irr\, W)^F}
      \widetilde \chi_v(C_w' F) R_{\widetilde \chi}.$$
\end{thm}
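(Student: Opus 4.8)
The plan is to reproduce Lusztig's geometric argument, whose two pillars are a Bott--Samelson type resolution of the closure $\overline{\rX(w)}$ and the decomposition theorem; the whole discussion has to be run $G\times\langle F^\delta\rangle$-equivariantly so as to keep track of the indeterminate $v$.

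First I would fix a reduced expression $w=s_1\cdots s_n$, with $n=\ell(w)$, and introduce the associated Deligne--Lusztig ``Bott--Samelson variety'' $\rX(s_1,\dots,s_n)$ parametrising chains $(B_0,\dots,B_n)$ of Borel subgroups with consecutive relative positions in $\{1,s_i\}$ and with $B_n=F(B_0)$. This variety is smooth and proper of dimension $n$, it carries commuting actions of $G$ and of $F^\delta$, and forgetting the Borel subgroups $B_1,\dots,B_{n-1}$ gives a proper surjection $\pi$ onto $\overline{\rX(w)}$ which restricts to an isomorphism over the open stratum $\rX(w)$. Iterating the $\mathbb{P}^1$-fibration (resp.\ affine-bundle) structure of these varieties as in Deligne--Lusztig and Digne--Michel \cite{DM85}, and feeding each step into the $F^\delta$-equivariant trace formalism, one computes the class $\sum_i(-1)^i[H^i(\rX(s_1,\dots,s_n))]$ --- with its $F^\delta$-eigenvalues recorded by $v$ --- as the image under $\chi\mapsto R_{\widetilde\chi}$ of the Hecke-algebra element attached to the word: up to an overall sign and an explicit power of $v$ this is $C_{s_1}'\cdots C_{s_n}'=(t_{s_1}+v)\cdots(t_{s_n}+v)$, each factor $\mathbb{P}^1$ contributing one multiplication by $C_s'=t_s+v$, and the finite set $\rX(1)=(\bG/\bB)^F$ contributing the unit.

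Next I would invoke the decomposition theorem for the proper map $\pi$ with smooth source: the complex $R\pi_*\overline{\QQ}_\ell$ splits, $G\times\langle F^\delta\rangle$-equivariantly, as a direct sum of shifted intersection cohomology complexes $IC(\overline{\rX(y)})$ for $y\leq w$, the multiplicities and shifts being governed by the Kazhdan--Lusztig polynomials $P_{y,w}$. This is exactly the geometric incarnation of the identity $C_{s_1}'\cdots C_{s_n}'=\sum_{y\leq w}m_{y,w}(v)\,C_y'$ in $\cH_v(W)$, with the pair $\big(\overline{\rX(y)},IH^\bullet(\overline{\rX(y)})\big)$ playing the role of $C_y'$. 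Since the alternating sum of cohomology is additive over direct sums and shifts, applying it to both sides of the decomposition turns the left-hand complex into the virtual module of the previous step and the right-hand side into $\sum_{y\leq w}m_{y,w}(v)\cdot\sum_i(-1)^i[IH^i(\overline{\rX(y)})]$, the signs from the shifts being absorbed by the parity properties of $IC$. Inverting this unitriangular system --- equivalently, inducting on $\ell(w)$, the base case $w=1$ being the finite set $\rX(1)$ --- isolates $\sum_i(-1)^i[IH^i(\overline{\rX(w)})]$ and identifies it with $(-1)^{\ell(w)}\sum_{\chi\in(\Irr\, W)^F}\widetilde\chi_v(C_w'F)R_{\widetilde\chi}$. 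The global sign $(-1)^{\ell(w)}$ is precisely the parity shift between the perverse normalisation of $IC(\overline{\rX(w)})$ used on the sheaf side --- under which the stalk cohomology along $\rX(y)$ is concentrated in degrees of constant parity $\ell(w)$ --- and the classical grading of $IH^\bullet$ in the statement.

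The step I expect to be the main obstacle is keeping the $F^\delta$-action under control throughout: one needs the decomposition theorem in an $F^\delta$-equivariant (equivalently, mixed) form, and --- more delicately --- one must check that the $F^\delta$-eigenvalues produced geometrically are exactly the monomials in $v$ dictated by the $C_w'$-expansion rather than merely agreeing up to the weight filtration. This is where the precise shape $\lambda_\rho q^{\delta m/2}$ of the eigenvalues of $F^\delta$ on a unipotent character $\rho$ (see \cite[Cor.~3.9]{Lu78}) is used, together with a purity argument ensuring that no cohomological cancellation occurs beyond what the Kazhdan--Lusztig expansion already records; only then is the coefficient on the right-hand side the honest polynomial $\widetilde\chi_v(C_w'F)$ rather than just its value at $v=1$ with lower-order corrections. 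The remaining points --- reconciling the normalisations $C_w'$ and $(-1)^{\ell(w)}C_w$ via the involution $\iota$, the boundary contributions in the iterated fixed-point count, and the comparison of compactly supported and ordinary cohomology --- are routine once the equivariant decomposition theorem and this purity input are in place.
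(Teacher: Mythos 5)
The paper offers no proof of this theorem: it is quoted directly from \cite[Thm.~3.8]{Lu84}, so you are reconstructing Lusztig's argument rather than one printed here. Your outline does capture the right overall shape --- a smooth proper resolution of $\overline{\rX(w)}$ by the compactified Deligne--Lusztig variety of a reduced word, then the decomposition theorem and a purity argument to control the $F^\delta$-eigenvalues.

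There is, however, a genuine gap at the step where you apply the decomposition theorem to $\pi$ and assert that $R\pi_*\overline{\QQ}_\ell$ splits into the $IC(\overline{\rX(y)})$ with multiplicities the Kazhdan--Lusztig structure constants $m_{y,w}(v)$. In the Schubert setting this is forced by equivariance: the Bott--Samelson map is $\bG$-equivariant, $\bG$ acts on the orbit closure $\overline{O(w)}\subset\bG/\bB\times\bG/\bB$ with finitely many simply connected orbits $O(y)$, so the only possible simple perverse summands are the $IC(\overline{O(y)})$ with constant coefficients, and Kazhdan--Lusztig identified the multiplicities with the polynomials $P_{y,w}$. In the Deligne--Lusztig setting there is no such equivariance --- $G=\bG^F$ does not act on $\overline{\rX(w)}$ with finitely many orbits --- so the decomposition theorem alone does not tell you which supports appear, that the local systems are trivial, or that the multiplicities are the $m_{y,w}(v)$. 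Lusztig supplies the missing input by observing that $\overline{\rX(w)}$ is the transversal intersection of $\overline{O(w)}$ with the graph of $F$ inside $\bG/\bB\times\bG/\bB$; transversality makes the restriction of $IC(\overline{O(y)})$ agree with $IC(\overline{\rX(y)})$ up to a shift and carries the whole Schubert-side decomposition, multiplicities included, over to the Deligne--Lusztig side. Your unitriangular inversion rests on exactly this identification, so without the transversality step the argument does not close. A minor secondary point: ``iterated fixed-point count'' is not where the $v$-grading is controlled; the eigenvalues are pinned down by Deligne's purity for the smooth proper resolution combined with the transversality comparison, matching the form recorded in \cite[Cor.~3.9]{Lu78}.
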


\begin{exmp}
The group $G = \SL_2(q)$ has two unipotent characters: the trivial character
$1_G$ and the Steinberg character $\St_G$. Here, the element $t_s$ represents
the cohomology of $\rX(s)$
whereas the element $C_s' = t_s + v$ represents the cohomology of
$\overline{\rX(s)} \cong \bP_1$. The irreducible characters
of $\cH_v(W)$ corresponding to the trivial and sign characters of $W$ are
defined by $1_v(t_s ) = v^{-1}$ and $\sgn_v(t_s) = -v$. We have
$[H^0(\rX(s))] = 1_G$ and $[H^1(\rX(s))] = v^2 \St_G$ so that
$$[H^0(\rX(s))] - [H^1(\rX(s))] = 1_G - v^2 \St_G.$$
It corresponds to the element $v t_s$ since $1_v(vt_s) = 1$ and
$\sgn_v(vt_s) = -v^2$.

We have also $[H^0\big(\overline{\rX(s)}\big)] = 1_G$,
$[H^1\big(\overline{\rX(s)}\big)] = 0$ and
$[H^2\big(\overline{\rX(s)}\big)] = v^2 1_G$ so that
$$[H^0\big(\overline{\rX(s)}\big)] - [H^1\big(\overline{\rX(s)}\big)] +
[H^2\big(\overline{\rX(s)}\big)] = 1_G + v^2 1_G$$
corresponds to $v C_s'$ since $1_v(vC_s') = v(v^{-1} + v) = 1+v^2$ and
$\sgn_v(vC_s') = v(-v +v) = 0$. Since $\overline{\rX(s)}$ is smooth and
one-dimensional, $IH^{i}\big(\overline{\rX(s)}\big) =
H^{i+1}\big(\overline{\rX(s)}\big)(-1)$, where $(-1)$ is a Tate twist
(contributing $v^{-1}$ in the character). Consequently the intersection
cohomology of $\overline{\rX(s)}$ corresponds to $-C_s'$.
\end{exmp}

\subsection{Basic sets for finite reductive groups}  \label{se:setting}
Let $\ell$ be a prime number and $(K,\mathcal{O},k)$ be an $\ell$-modular
system. We assume that it is large enough for all the finite groups
encountered. Furthermore, since we will be working with $\ell$-adic cohomology
we will assume throughout this note that $K$ is a finite extension of
$\QQ_\ell$.
\par
Let $H$ be a finite group. Representations of $H$ will always be assumed
to be finite-dimensional. Recall that every projective $kH$-module lifts to
a representation of $H$ over $K$. The character afforded by such a
representation will be referred to as a \emph{projective character}. Integral
linear combinations of projective characters will be called \emph{virtual
projective characters}.
\par

\medskip
\setlength\fboxsep{0pt}
\colorbox{gray!20}{\fbox{\parbox{15cm}{Throughout this paper, we shall
 make the following assumptions on $\ell$:
 \begin{itemize}
  \item $\ell \neq p$ (non-defining characteristic),
  \item $\ell$ is good for $\bG$ and $\ell \nmid |(Z(\bG)/Z(\bG)^\circ)^F|$.
 \end{itemize}}}
}
\medskip

In this situation, the unipotent characters lying in a given unipotent
$\ell$-block of $G$ form a \emph{basic set} of this block \cite{GH91,Ge93}.
Consequently, the restriction of the
decomposition matrix of the block to the unipotent characters is invertible.
In particular every (virtual) unipotent character is a virtual projective
character, up to adding and removing some non-unipotent characters.

\subsection{A positivity conjecture \label{sec:conjecture}}
Let $D_G$ denote the Alvis--Curtis duality, with the convention that if $\rho$
is a cuspidal unipotent character, then $D_G(\rho) = (-1)^{\rk_F (\bG)} \rho$
where $\rk_F (\bG) = |S/F|$ is the $F$-semisimple rank of $\bG$. Then
$D_G(R_{\widetilde \chi}) = R_{\widetilde \chi \otimes \widetilde \sgn}$
(see \cite[6.8.6]{Lu84}) and
$(\widetilde \chi \otimes \widetilde{\mathrm{sgn}})_v  =
\widetilde \chi_v \circ \iota$
(see for example \cite[5.11.4]{Lu84}) so that by Theorem~\ref{thm:dlvar}
$$\sum_{i \in\ZZ} (-1)^i \big[D_G\big(IH^i(\overline{\rX(w)})\big)\big]
  = \sum_{\chi \in (\Irr\, W)^F} \widetilde \chi_v(C_w F) R_{\widetilde \chi}.$$
We denote by $Q_w$ the restriction to $G$ of this virtual character, and by
$Q_w[\lambda]$ the generalized $\lambda$-eigenspace of $F^\delta$ for
$\lambda \in k^\times$.
\par
Lusztig proved \cite[Prop.~6.9 and~6.10]{Lu84} that up to a global sign,
$Q_w$ (and even $Q_w[\lambda]$) is always a nonnegative combination
of unipotent characters (note that the assumption on $q$ can be
removed by \cite[Cor.~3.3.22]{DMR}). The sign is given by the $a$-value $a(w)$
of the two-sided cell in which $w$ lies.

\begin{prop}[Lusztig] \label{prop:cwchar0}
  For all $w \in W$ and all $\la \in k^\times$, $(-1)^{a(w)} Q_w[\la]$
  is a sum of unipotent characters.
\end{prop}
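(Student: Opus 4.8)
The plan is to translate the assertion into a positivity statement about character values of the Iwahori--Hecke algebra and to extract the latter from the theory of two-sided cells and the asymptotic algebra $J$.

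First I would put the statement in graded form. By the definition of $Q_w$ we have $Q_w=\sum_{\chi\in(\Irr W)^F}\widetilde\chi_v(C_wF)\,R_{\widetilde\chi}$, restricted to $G$. Using Lusztig's partition of unipotent characters into families together with the nonabelian Fourier matrix (see \cite[Ch.~4]{Lu84}), the fact that $R_{\widetilde\chi}$ is supported on the family of $\chi$, and the refinement of Digne--Michel \cite{DM85} recording the action of $F^\delta$, one gets an expansion $Q_w=\sum_\rho m^w_\rho(v)\,\rho$ with $m^w_\rho(v)\in\ZZ[v,v^{-1}]$ (up to an overall power of $v$ depending only on $\rho$), in which a monomial $c\,v^{\delta k}$ of $m^w_\rho$ records that $F^\delta$ acts on $c$ copies of $\rho$ by the scalar $\la_\rho q^{\delta k/2}$. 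Then $Q_w[\la]$ is obtained from $Q_w$ by retaining, for each $\rho$, only the monomials with $\overline{\la_\rho q^{\delta k/2}}=\la$ in $k^\times$; so it is enough to prove the stronger, grading-free statement $(-1)^{a(w)}m^w_\rho(v)\in\ZZ_{\ge 0}[v,v^{-1}]$ for every unipotent $\rho$.

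Next I would localise the problem in a single family and pass to the asymptotic algebra. Since $R_{\widetilde\chi}$ and $\rho$ are orthogonal unless they lie in the same family, $m^w_\rho(v)$ is a $\ZZ$-combination of the $\widetilde\chi_v(C_wF)$ for $\chi$ in the family $\cF$ of $\rho$, with coefficients the corresponding Fourier-matrix entries. Now I would invoke the theory of cells and of leading coefficients of character values (see \cite[Ch.~5]{Lu84}): writing $\mathbf c$ for the two-sided cell of $w$, $a(w)$ for its $a$-value and $\cF(\mathbf c)$ for its family, the Laurent polynomial $\widetilde\chi_v(C_wF)$ lies in $v^{-a(w)}\ZZ[v]$, it contributes at the extreme degree only when $\chi\in\cF(\mathbf c)$, and its leading term is governed by the canonical basis of the asymptotic algebra $J_{\mathbf c}$. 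The decisive input is that $J_{\mathbf c}$ has non-negative structure constants (for Weyl groups a consequence of the geometric interpretation of Kazhdan--Lusztig polynomials), that $J_{\mathbf c}\otimes\overline{\QQ_\ell}$ is identified with an algebra built from the finite group attached to $\cF$, and that under this identification the Fourier matrix is precisely the change of basis to the representation-theoretic basis; hence the leading coefficient of $m^w_\rho$, multiplied by $(-1)^{a(w)}$, is a genuine multiplicity of an honest representation, so it is $\ge 0$. Finally, since $Q_w$ carries the self-duality coming from Poincaré duality in intersection cohomology, each $m^w_\rho(v)$ is palindromic; peeling off the contributions of $\chi$ in strictly smaller families, which occur only in non-extreme degrees, and inducting downward on the $a$-value, one upgrades non-negativity at the extreme degree to non-negativity of all coefficients of $(-1)^{a(w)}m^w_\rho(v)$.

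The hard part is the positivity used above: the non-negativity of the pairing between the leading coefficients of Hecke-algebra character values and the Fourier-matrix entries. Everything else is essentially bookkeeping (the passage to the $F^\delta$-graded picture via \cite{DM85}, the reduction to a single family, the tracking of the sign, and, when $F$ acts nontrivially on $W$, working throughout with the preferred extensions $\widetilde\chi$). That positivity, by contrast, genuinely rests on the deep structural results on cells and on $J$ — ultimately on the geometry behind the positivity of Kazhdan--Lusztig polynomials — and on the precise compatibility of the Lusztig Fourier transform with the canonical basis of $J$; it is exactly the feature one does not see from the defining formula for $Q_w$ alone.
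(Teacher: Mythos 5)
First, a structural point: the paper does not prove this proposition. It is stated as a citation to Lusztig \cite[Props.~6.9 and 6.10]{Lu84}, with \cite[Cor.~3.3.22]{DMR} removing a restriction on $q$. So there is no in-paper argument to compare your proposal against; you are proposing a proof of a result the paper takes as a black box.

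Your reduction to the stronger, grading-free claim that $(-1)^{a(w)}m^w_\rho(v)$ has nonnegative coefficients is fine, and the ingredients you invoke for the coefficient of $v^{-a(w)}$ --- the $a$-function bound on $\widetilde\chi_v(C_wF)$, the nonnegativity of the structure constants of the asymptotic algebra, and the compatibility of the nonabelian Fourier transform with its canonical basis --- are the right ones for that extreme coefficient. However, the final step is a genuine gap. Palindromicity of $m^w_\rho$ relates the coefficient of $v^{-a(w)+j}$ to that of $v^{a(w)-j}$ and says nothing about the sign of the coefficients in between; those receive contributions both from the non-leading terms of $\widetilde\chi_v(C_wF)$ for $\chi$ in the family attached to the cell of $w$ and from $\widetilde\chi_v(C_wF)$ for $\chi$ attached to other cells, and positivity at the two extremes does not force positivity in the interior. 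It is also unclear what ``peeling off the contributions of $\chi$ in strictly smaller families'' means at the level of the Hecke element: $C_w$ is not an iterated sum of $C_y$'s from other cells plus a controlled remainder, and $m^w_\rho$ does not satisfy the kind of recursion in $a$-value that a downward induction would need. The statement one must prove is genuinely about the entire Laurent polynomial, not only its leading term; this is precisely where you say ``everything else is essentially bookkeeping,'' but it is the crux, and the proposal does not close it.
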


Unipotent characters are only the unipotent part of virtual
projective characters in general. We conjecture that the modular analogue
of Proposition~\ref{prop:cwchar0} should hold in general, that is that $Q_w$ is
actually a proper projective character whenever
$\ell$ is not too small.

\begin{conj}   \label{conj:cwproj}
 Under the assumption on $\ell$ in \S\ref{se:setting},
 for all $w \in W$ and all $\lambda \in k^\times$,
 $(-1)^{a(w)}Q_w[\lambda]$ is the unipotent part of a projective character.
\end{conj}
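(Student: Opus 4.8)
The plan is to deduce Conjecture~\ref{conj:cwproj} from Proposition~\ref{prop:cwchar0} by controlling how much a ``unipotent part of a projective character'' can fail to be a genuine unipotent character once we pass to positive characteristic. Write $P_\rho$ for the projective cover of the modular reduction of a unipotent character $\rho$; since by \S\ref{se:setting} the unipotent characters in a unipotent block form a basic set, the unipotent part of $P_\rho$ equals $\rho + \sum_{\rho'} d_{\rho'\rho}\,\rho'$ where the $d_{\rho'\rho}$ are the (nonnegative) decomposition numbers for non-unipotent constituents' projective covers contributing to the unipotent row, or more precisely $\rho$ plus a $\ZZ_{\geq 0}$-combination of other unipotent characters coming from the triangularity of the decomposition matrix on the basic set. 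First I would record, for fixed $w$, the expansion $(-1)^{a(w)}Q_w[\lambda] = \sum_\rho m_\rho\,\rho$ with $m_\rho \geq 0$ granted by Proposition~\ref{prop:cwchar0}. The goal is then to show $\sum_\rho m_\rho\,\rho$ lies in the $\ZZ_{\geq 0}$-span of the unipotent parts of the $P_\rho$.

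The key geometric input I would use is that $Q_w$ is, up to sign, the Alvis--Curtis dual of the intersection cohomology complex of $\overline{\rX(w)}$, and Alvis--Curtis duality is realized by $\Res$ then $\Ind$ along a Levi--parabolic chain; because Harish--Chandra induction of a projective module is projective, the dual of an honest intersection cohomology sheaf should carry a genuine projective $kG$-module in each cohomological degree once $\ell$ is large enough for the decomposition matrices of all proper Levi subgroups to be ``known'' (e.g. unitriangular with respect to the unipotent basic set). So the strategy has two layers: (i) an inductive layer on the semisimple rank, where for proper Levi subgroups $L$ we assume the analogue of the conjecture and hence that each $Q_w^L$ is a true projective $kL$-character; (ii) a top layer where one argues that the contribution of the cuspidal part of $IH^\bullet(\overline{\rX(w)})$ — the part not reachable by Harish--Chandra induction from Levis — is still, after Alvis--Curtis duality and modular reduction, a nonnegative combination of projective covers. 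For layer (ii) one would want a bound: the coefficients $m_\rho$ already computed in characteristic $0$ must dominate the ``error terms'' $d_{\rho'\rho}$ that appear when one rewrites unipotent characters via projective covers, and this is exactly where the hypothesis ``$\ell$ not too small'' enters, guaranteeing that those error terms are small (the decomposition matrix is close to the identity on the basic set).

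The main obstacle, I expect, is precisely making layer (ii) unconditional: there is as yet no general geometric model of Alvis--Curtis duality at the level of modular (i.e. $\overline{\FF_\ell}$-adic or $\ZZ_\ell$-adic) sheaves that would let one say ``$D_G$ of a perverse sheaf is a complex whose cohomology in each degree is a projective $kG$-module.'' Without it, one is forced to argue numerically: expand $(-1)^{a(w)}Q_w[\lambda]$ in the basis of unipotent characters, expand each unipotent character in terms of the projective covers $P_\rho$ using the inverse of the (unitriangular, for $\ell$ large) unipotent decomposition matrix, and check that all resulting coefficients are $\geq 0$. That positivity is not formal — the inverse of a unitriangular nonnegative matrix has signs — so one needs extra structure, e.g. that the relevant decomposition numbers are governed by parabolic Kazhdan--Lusztig polynomials (as for $\GL_n$ and, conjecturally, in the James/Geck--Jacon picture) whose alternating-sum combinations with the $\widetilde\chi_v(C_wF)$ coefficients are again $v=1$ specializations of Kazhdan--Lusztig-type polynomials, hence nonnegative. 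In cases where such a combinatorial model is available — general linear and unitary groups, small-rank classical and exceptional groups — I would carry out this check explicitly; in general I would flag this as the crux and the reason the statement is posed as a conjecture, while noting that the geometric approach via a modular lift of the Alvis--Curtis functor (in the spirit of the Deligne--Lusztig--Bonnafé--Rouquier machinery) is the natural route to a uniform proof.
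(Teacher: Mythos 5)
This statement is a conjecture, not a theorem: the paper offers no proof, only examples and partial evidence in \S\ref{se:examples}, and you correctly recognize this --- your own ``layer (ii),'' the cuspidal contribution, is precisely the open core of the problem, and the route you sketch (a modular geometric lift of Alvis--Curtis duality) is the one the authors themselves flag in the introduction as the most promising path to a uniform proof. So there is no ``paper's own proof'' to compare against, and your text is best read as an honest discussion of strategy rather than a proof.

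A few cautions on the parts you did sketch. Alvis--Curtis duality is not a single ``Res then Ind along a Levi--parabolic chain''; it is the alternating sum $D_G=\sum_{I\subseteq S}(-1)^{|I|}R_{L_I}^G\circ{}^*R_{L_I}^G$ over all standard parabolics, so even granting that each summand sends projectives to projectives, the signs destroy any naive positivity argument (that $D_G$ of a PIM is, up to sign, again a PIM is a theorem in its own right, not a formal consequence of Harish--Chandra induction preserving projectives). Your inductive layer (i) would also require $Q_w$ for $G$ to be built from the $Q_{w'}^L$ of proper Levi subgroups; that can only see the non-cuspidal part, so the induction gives no leverage exactly where the conjecture has content, as you acknowledge. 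Finally, your rewriting of unipotent characters in terms of unipotent parts of projective covers $P_\rho$ silently presupposes unitriangularity of the unipotent decomposition matrix, which is Geck's conjecture (the paper cites \cite{GH97}); the authors in fact \emph{derive} that triangular shape as a consequence of Conjecture~\ref{conj:cwproj}, so invoking it as an ingredient risks circularity.
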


\begin{exmp}
The closure of the Deligne--Lusztig variety $\rX(w_0)$ associated with the
longest element $w_0$ of $W$ is smooth and equal to $\bG/\bB$. Therefore its
intersection
cohomology $IH^\bullet(\overline{\rX(w_0)})$ consists of copies of the trivial
representation in degrees of a given parity. Since $a(w_0) = \ell(w_0)$,
then $(-1)^{a(w_0)}Q_{w_0}$
is a nonnegative multiple of the Steinberg character. On the other hand,
the Steinberg character is the unipotent part of a unique projective
indecomposable module (given by a summand of a Gelfand--Graev module) and
therefore the conjecture holds for $w_0$.
\end{exmp}

\begin{rem}
This conjecture has been checked on many decomposition matrices, including
the unipotent blocks with cyclic defect groups for exceptional groups,
and the matrices that are determined in \cite{Du13,DM13}. We
will give some examples in \S~\ref{se:examples}.
\end{rem}

%%%%%%%%%%%%%%%%%%%%%%%%%%%%%%%%%%%%%%%%%%%%%%%%%%%%%%%%%%%%%%%%%%%%%%%%%%%%%%%
\section{Applications}

In this section we explain how to deduce properties
of decomposition matrices using Conjecture~\ref{conj:cwproj}.

\subsection{Families of simple unipotent modules} Following \cite[\S5]{Lu84},
we denote by $\leq_\LR$ the partial order on two-sided cells of $W$. Recall
that to each cell $\Gamma$ corresponds a two-sided ideal $I_{\leq \Gamma}
:= \mathrm{span}_{\QQ}\{{C_w}_{|v=1}\mid w\leq_\LR\Gamma\}$
of the group algebra $\QQ W$.  Moreover, given $\chi \in \Irr\, W$, there is
a unique two-sided cell $\Gamma_\chi$ such that $\chi$ occurs in
$I_{\leq \Gamma_\chi}/I_{< \Gamma_\chi}$. To each two-sided cell $\Gamma$
one can attach a so-called family $\cF_\Gamma$ of unipotent characters. They
are defined as the constituents of $R_\chi$ for various $\chi$ such that
$\Gamma_\chi = \Gamma$. By \cite[Thm.~6.17]{Lu84}, they form a partition of the
set of unipotent characters of $G$.
\par
We can use the characters $Q_w$ to define families of simple unipotent
$kG$-modules. For each $Q_w$ we choose a virtual projective $kG$-module
$\widetilde Q_w$ whose character coincides with $Q_w$ up to adding/removing
non-unipotent characters. We denote by $\Irr_{\leq \Gamma} kG$
(resp.~$\Irr_{< \Gamma} kG$) the set of simple unipotent $kG$-modules $N$ such
that a  projective cover $P_N$ occurs in the virtual module $\widetilde Q_w$
for some $w\leq_\LR \Gamma$ (resp.~$w <_\LR \Gamma$).
We define the family of simple unipotent modules associated to $\Gamma$
as $\Irr_{\Gamma} kG =\Irr_{\leq \Gamma} kG \setminus \Irr_{< \Gamma} kG$.
Since the regular representation is uniform (see for example
\cite[Cor.~12.14]{DM91}) then every indecomposable projective module
lying in a unipotent block occurs in some $\widetilde Q_w$,
therefore every simple unipotent module belongs to at least one family.
However it is unclear whether this family is unique in general.

\begin{prop}
 Let $b$ be a unipotent $\ell$-block of $G$. Assume that
 \begin{itemize}
  \item[(i)] every unipotent character in $b$ is a linear combination of
   $bR_\chi$'s, and
  \item[(ii)] Conjecture~\ref{conj:cwproj} holds.
 \end{itemize}
 Then for every two-sided cell $\Gamma$
 $$|\Irr_{\Gamma} bkG | = |\cF_\Gamma \cap \Irr\, b|.$$
 In particular, any simple $b$-module lies in a unique family.
\end{prop}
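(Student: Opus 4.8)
The plan is to compare, for down-sets $I$ of the partial order $\leq_\LR$ on two-sided cells, the span of the virtual projectives $\widetilde Q_w$ attached to cells in $I$ with the span of the unipotent characters of $b$ lying in the families $\cF_{\Gamma'}$ with $\Gamma'\in I$. Write $\Gamma_w$ for the cell containing $w$, and for a cell $\Delta$ put $U_{\leq\Delta}:=\bigoplus_{\Gamma'\leq_\LR\Delta}\mathrm{span}_K(\cF_{\Gamma'}\cap\Irr b)$ and $U_{<\Delta}:=\bigoplus_{\Gamma'<_\LR\Delta}\mathrm{span}_K(\cF_{\Gamma'}\cap\Irr b)$. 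Hypothesis~(i) will identify the first span (which a priori involves only almost characters) with the second, and hypothesis~(ii) will then let us read off which indecomposable projectives $P_N$ occur; since $\Irr_{\leq\Gamma}bkG$ and $\Irr_{<\Gamma}bkG$ are attached to the down-sets $\{\Gamma'\mid\Gamma'\leq_\LR\Gamma\}$ and $\{\Gamma'\mid\Gamma'<_\LR\Gamma\}$, the proposition will follow by subtracting cardinalities.

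\textbf{Steps 1--2 (from $Q_w$'s to families, via (i)).} Fix a down-set $I$. The elements $C_w$ with $\Gamma_w\in I$ span a two-sided ideal $J_I$ of $\cH_v$ (equivalently of $\QQ W$ at $v=1$) which, over the relevant splitting field, is the sum of the matrix blocks indexed by the $\chi$ with $\Gamma_\chi\in I$; in particular $C_w$ kills the $\chi_v$-isotypic part whenever $\Gamma_\chi\notin I$, so $\widetilde\chi_v(C_w F)=0$ for such $\chi$ and $Q_w\in\mathrm{span}_K\{R_{\widetilde\chi}\mid\Gamma_\chi\in I\}$ for $\Gamma_w\in I$. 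Conversely, since those $C_w$ already span $J_I$ their images span each block $\mathrm{End}(\chi_v)$ with $\Gamma_\chi\in I$, and (the action of $F$ on $\widetilde\chi_v$ being invertible) the functionals $x\mapsto\widetilde\chi_v(xF)$, $\chi\in(\Irr W)^F$ with $\Gamma_\chi\in I$, are linearly independent on $J_I$; as the $R_{\widetilde\chi}$ are linearly independent, the $Q_w$ with $\Gamma_w\in I$ span $\mathrm{span}_K\{R_{\widetilde\chi}\mid\Gamma_\chi\in I\}$. Projecting onto $b$: each $bR_{\widetilde\chi}$ with $\Gamma_\chi=\Gamma'$ lies in $\mathrm{span}_K(\cF_{\Gamma'}\cap\Irr b)$ by the definition of the family $\cF_{\Gamma'}$; these spaces are in direct sum because families partition the unipotent characters, and by hypothesis~(i) their total is the span of all unipotent characters of $b$, so a dimension count gives $\mathrm{span}_K\{bR_{\widetilde\chi}\mid\Gamma_\chi=\Gamma'\}=\mathrm{span}_K(\cF_{\Gamma'}\cap\Irr b)$ for every $\Gamma'$. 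Consequently, for every down-set $I$,
$$\mathrm{span}_K\{\,bQ_w\mid\Gamma_w\in I\,\}=U_I:=\bigoplus_{\Gamma'\in I}\mathrm{span}_K(\cF_{\Gamma'}\cap\Irr b),\qquad \dim_K U_I=\sum_{\Gamma'\in I}|\cF_{\Gamma'}\cap\Irr b|.$$

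\textbf{Step 3 (from families to projectives, via (ii)).} Let $D=(d_{\rho N})$ be the decomposition matrix of the unipotent characters of $b$; by \S\ref{se:setting} it is square, invertible and nonnegative, and the unipotent parts $[P_N]^\uni=\sum_\rho d_{\rho N}[\rho]$ form a $K$-basis of the span of the unipotent characters of $b$ (in particular each $\widetilde Q_w$ is determined by $Q_w$). By Conjecture~\ref{conj:cwproj}, $(-1)^{a(w)}\widetilde Q_w$ is a projective character, say $\sum_N a_{w,N}[P_N]$ with every $a_{w,N}\geq 0$; its unipotent part $\sum_\rho\big(\sum_N a_{w,N}d_{\rho N}\big)[\rho]$ is thus a \emph{nonnegative} combination of the $[\rho]$. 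By Steps~1--2 it lies in $U_{\leq\Gamma_w}$, so its coefficient on every $\rho\notin\bigcup_{\Gamma'\leq_\LR\Gamma_w}\cF_{\Gamma'}$ vanishes, and nonnegativity forces $a_{w,N}\,d_{\rho N}=0$ for each such $\rho$. Hence \emph{if $P_N$ occurs in $\widetilde Q_w$ then $[P_N]^\uni\in U_{\leq\Gamma_w}$}. Together with the inclusion $U_I\subseteq\mathrm{span}_K\{[P_N]^\uni\mid N\in\Irr_{\leq I}bkG\}$ coming from Steps~1--2 and the linear independence of the $[P_N]^\uni$, this yields, for every down-set $I$,
$$\Irr_{\leq I}bkG:=\{\,N\in\Irr bkG\mid P_N\text{ occurs in some }\widetilde Q_w\text{ with }\Gamma_w\in I\,\}=\{\,N\mid[P_N]^\uni\in U_I\,\},$$
and hence $|\Irr_{\leq I}bkG|=\dim_K U_I=\sum_{\Gamma'\in I}|\cF_{\Gamma'}\cap\Irr b|$.

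\textbf{Step 4 (conclusion).} Taking for $I$ the down-sets $\{\Gamma'\mid\Gamma'\leq_\LR\Gamma\}$ and $\{\Gamma'\mid\Gamma'<_\LR\Gamma\}$ (the latter contained in the former, whence $\Irr_{<\Gamma}bkG\subseteq\Irr_{\leq\Gamma}bkG$) and subtracting,
$$|\Irr_\Gamma bkG|=|\Irr_{\leq\Gamma}bkG|-|\Irr_{<\Gamma}bkG|=\sum_{\Gamma'\leq_\LR\Gamma}|\cF_{\Gamma'}\cap\Irr b|-\sum_{\Gamma'<_\LR\Gamma}|\cF_{\Gamma'}\cap\Irr b|=|\cF_\Gamma\cap\Irr b|,$$
because the families partition the unipotent characters of $b$. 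Summing over all $\Gamma$ gives $\sum_\Gamma|\Irr_\Gamma bkG|=|\Irr bkG|$ by the basic-set property; since every simple $b$-module lies in at least one family, the $\Irr_\Gamma bkG$ then form a partition of $\Irr bkG$, so each simple $b$-module lies in a unique family. The heart of the argument is Step~3, the only place where Conjecture~\ref{conj:cwproj} is used essentially: the weaker Proposition~\ref{prop:cwchar0} controls only the unipotent part of $\widetilde Q_w$ and hence does not determine which indecomposable projectives occur in it, whereas here one really needs that $\pm\widetilde Q_w$ is an honest projective character. The remaining technical chore is the bookkeeping in Step~1, that $I\mapsto J_I$ sends down-sets to two-sided ideals with the stated block description — routine cell theory — together with checking that all the spans produced along the way have the asserted dimensions.
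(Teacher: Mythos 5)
Your proof is correct and follows essentially the same route as the paper's: you use the two‑sided cell ideal $I_{\leq\Gamma}$ (via the primitive idempotents, respectively the block description of $J_I$) to identify $\mathrm{span}\{bQ_w\mid w\leq_\LR\Gamma\}$ with the span of unipotent characters in families $\leq_\LR\Gamma$, and then the positivity in Conjecture~\ref{conj:cwproj}, together with the invertibility of the unipotent part of the decomposition matrix, forces each $[P_N]^\uni$ occurring in some $\widetilde Q_w$ with $w\leq_\LR\Gamma$ to lie in that same span, giving $|\Irr_{\leq\Gamma}bkG|=\dim U_{\leq\Gamma}$. The paper's proof compresses this into a two‑sided inequality $\dim\mathrm{span}\{bR_\chi\mid\Gamma_\chi\leq\Gamma\}\leq|\Irr_{\leq\Gamma}bkG|\leq|\{\rho\in\Irr b\mid\cF(\rho)\leq\Gamma\}|$ closed by hypothesis~(i), but the underlying mechanism is the same and your more explicit bookkeeping (in particular, spelling out why the nonnegative decomposition multiplicities preclude cancellation, and the final partition argument for uniqueness) is a faithful expansion rather than a different argument.
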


\begin{proof}
Given $\chi \in \Irr\,W$ and $\Gamma_\chi$ the corresponding two-sided cell,
the primitive idempotent $e_\chi \in \QQ W$ lies in $I_{\leq \Gamma_\chi}$.
In other words, it is a linear combination of ${C_w}_{|v=1}$'s with
$w \leq_{\LR} \Gamma_\chi$. Consequently, for all $\chi \in \Irr\,W$
$$ R_\chi \in \mathrm{span}\{Q_w\mid w\leq_\LR \Gamma_\chi\}.$$
Therefore $|\Irr_{\leq \Gamma} bkG| \geq \dim \mathrm{span} \{bR_\chi\mid\Gamma_\chi \leq \Gamma\}$
which is the number of unipotent characters in $b$ whose family is smaller
than $\Gamma$. But every $bQ_w$ is a linear combination of these characters,
and so is every PIM occurring in $b\widetilde Q_w$ if
Conjecture~\ref{conj:cwproj} holds. In particular, $|\Irr_{\leq \Gamma} bkG|$
has to be smaller than the number of these characters and we conclude using
the assumption (i).
\end{proof}

\begin{rem}
Note that (i) is not always satisfied, for example when two complex conjugate
characters lie in the same block. However the validity of this assumption
is easy to check on the Fourier matrices. For example, it is valid whenever
$\ell | (q\pm1)$ and $\bG$ is exceptional.
\end{rem}

\subsection{Application to the decomposition matrix}
It is conjectured that, for $\ell$ not too small, the $\ell$-modular
decomposition matrix of $G$
depends only on the order of $q$ modulo $\ell$. The following proposition
gives some evidence toward this conjecture as well as to Geck's conjecture
on the unitriangular shape of the decomposition matrix
\cite[Conj.~3.4]{GH97}.

\begin{prop}
 Let $b$ be a unipotent $\ell$-block of $G$. Assume that
 \begin{itemize}
  \item[(i)] Conjecture~\ref{conj:cwproj} holds,
  \item[(ii)] $|\Irr_{\Gamma} bkG | = |\cF_\Gamma \cap \Irr\, b|$ for any
   two-sided cell $\Gamma$ of $W$.
 \end{itemize}
 Then the unipotent part of the decomposition matrix of $b$ has the following
 shape:
 $$D_\uni \, = \, \left( \begin{array}{cccc} D_{\cF_1} & 0 & \cdots & 0\\
            {}* & D_{\cF_2} & \ddots & \vdots \\
            \vdots & \ddots & \ddots & 0 \\
            {}* & \cdots  & * & D_{\cF_r} \\
            \end{array}\right)$$
 where $\cF_i$ runs over the families and where each  $D_{\cF_i}$ is a square
 matrix of size $|\cF_i \cap \Irr\,b|$.  Furthermore, the entries of $D_\uni$
 are bounded above independently of $q$ and $\ell$.
\end{prop}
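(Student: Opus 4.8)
The plan is to read off the shape of $D_\uni$ from the positivity of the characters $Q_w$ --- upgraded to genuine projectivity by hypothesis~(i) --- together with the observation that $(-1)^{a(w)}Q_w$ only involves unipotent characters lying in families attached to two-sided cells $\leq_\LR$ the cell of $w$. By~(i) I may and do fix, for each $w$, a choice of $\widetilde Q_w$ such that $(-1)^{a(w)}\widetilde Q_w$ is an \emph{honest} projective character; all of the constructions below use this choice.

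\textbf{Partition into families and shape of the blocks.} I would first record that the sets $\Irr_\Gamma bkG$ form a \emph{partition} of $\Irr bkG$: every PIM of $b$ occurs in some $b\widetilde Q_w$ (as recalled above, using that the regular representation is uniform), so these sets cover $\Irr bkG$; on the other hand, by~(ii) and the fact that the unipotent characters of $b$ form a basic set, $\sum_\Gamma|\Irr_\Gamma bkG|=\sum_\Gamma|\cF_\Gamma\cap\Irr b|$, which by Lusztig's partition of the unipotent characters into families is the number of unipotent characters of $b$, hence equals $|\Irr bkG|$. This forces disjointness. Thus each simple unipotent $kG$-module $N$ lies in a well-defined family, and the column-block of $D_\uni$ attached to $\cF_i$ has size $|\Irr_{\cF_i}bkG|=|\cF_i\cap\Irr b|$, the same as the corresponding row-block; so the diagonal blocks $D_{\cF_i}$ are square of the asserted size. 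It remains to produce an ordering of the families making $D_\uni$ block lower triangular, and to bound its entries.

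\textbf{Support of the columns.} Fix $w\in W$ with two-sided cell $\Gamma_w$. Specializing $v\mapsto1$ upon restriction to $G$, the formula preceding Conjecture~\ref{conj:cwproj} gives $Q_w=\sum_{\chi}\widetilde\chi_v(C_wF)|_{v=1}\,R_{\widetilde\chi}$, and $C_w|_{v=1}$ lies in the two-sided ideal $I_{\leq\Gamma_w}$ of $\QQ W$; since that ideal acts as zero in every irreducible $W$-module $\chi$ with $\Gamma_\chi\not\leq_\LR\Gamma_w$, the coefficient $\widetilde\chi_v(C_wF)|_{v=1}$ vanishes for such $\chi$, and as each almost character $R_{\widetilde\chi}$ is a combination of the unipotent characters in the family $\cF_{\Gamma_\chi}$, the virtual character $bQ_w$ is supported on $\bigcup_{\Gamma'\leq_\LR\Gamma_w}(\cF_{\Gamma'}\cap\Irr b)$. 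Now by Proposition~\ref{prop:cwchar0} and the choice above, write $(-1)^{a(w)}bQ_w=\sum_N m_{w,N}[P_N]_{\uni}$ with $m_{w,N}\in\ZZ_{\ge0}$ and $[P_N]_{\uni}=\sum_\rho D_{\rho,N}\,\rho$ with $D_{\rho,N}\in\ZZ_{\ge0}$. The coefficient of a unipotent character $\rho$ in $(-1)^{a(w)}bQ_w$ then equals $\sum_N m_{w,N}D_{\rho,N}$, a sum of nonnegative terms, so it vanishes only if each summand does. Hence, if $m_{w,N}\ge1$, then $D_{\rho,N}=0$ unless $\rho$ lies in some $\cF_{\Gamma'}$ with $\Gamma'\leq_\LR\Gamma_w$. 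Consequently, if $N\in\Irr_\Gamma bkG$ --- so that $m_{w,N}\ge1$ for some $w\leq_\LR\Gamma$ --- then $D_{\rho,N}\neq0$ forces $\Gamma_\rho\leq_\LR\Gamma$. Ordering the families $\cF_1,\dots,\cF_r$ by any linear extension of the order $\geq_\LR$ on cells --- equivalently by nondecreasing $a$-value, since Lusztig's $a$-function reverses $\leq_\LR$ --- this is exactly the statement that $D_\uni$ is block lower triangular with diagonal blocks $D_{\cF_i}$.

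\textbf{Boundedness.} By Theorem~\ref{thm:dlvar}, the multiplicity $\langle(-1)^{a(w)}Q_w,\rho\rangle$ of any unipotent character $\rho$ in $(-1)^{a(w)}Q_w$ is a nonnegative integer determined combinatorially by $W$, its (twisted) Kazhdan--Lusztig polynomials at $1$, the action of $F$, and Lusztig's Fourier matrices; in particular it is independent of $q$ and of $\ell$. Let $C$ be the maximum of these quantities over all $w\in W$ and all unipotent $\rho$. For a simple unipotent $kG$-module $N$ in $b$, choose $w$ with $m_{w,N}\ge1$; then for every unipotent character $\rho$ of $b$
$$D_{\rho,N}\ \le\ m_{w,N}D_{\rho,N}\ \le\ \sum_{N'}m_{w,N'}D_{\rho,N'}\ =\ \langle(-1)^{a(w)}Q_w,\rho\rangle\ \le\ C,$$
so every entry of $D_\uni$ is bounded by $C$, independently of $q$ and $\ell$.

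\textbf{Main difficulty.} The crux is the no-cancellation step: it genuinely needs that Conjecture~\ref{conj:cwproj} supplies an honest projective character rather than a merely virtual one, for only then do the nonnegativity of the $m_{w,N}$ and of the decomposition numbers together pin down the support of each column. Everything else is bookkeeping --- matching the directions of $\leq_\LR$ and of the $a$-function with the chosen ordering of families, and invoking the standard independence from $q$ and $\ell$ of the multiplicities extracted from $IH^\bullet(\overline{\rX(w)})$.
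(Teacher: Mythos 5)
Your proof is correct and follows essentially the same approach as the paper's (which is far terser): combine the block-triangularity of the $Q_w$'s coming from the two-sided ideal structure with the positivity supplied by Conjecture~\ref{conj:cwproj} to rule out cancellation in the columns of $D_\uni$. Your write-up also supplies the details the paper leaves implicit — the counting argument that (ii) really does yield a partition of the simple modules into families, the sign conventions matching $\leq_\LR$ with increasing $a$-value, and in particular a full proof of the boundedness claim, which the paper does not spell out at all.
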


\begin{proof}
Given any simple $kG$-module $N$ in the block, there exists by (ii) a unique
two-sided cell $\Gamma$ such that $N \in \Irr_{\Gamma} bkG$  and $P_N$
occurs in some $Q_w$ for $w \in \Gamma$ (note that $w {\not<}_\LR \Gamma$
otherwise $N$ would belong to a smaller family). Since the matrix
of the $Q_w$'s is block triangular with respect to families, the proposition
follows from (i).
\end{proof}

\subsection{Determining decomposition numbers} The bounds on the entries
of the decomposition matrix given by the $Q_w$'s are often small enough
to determine some of the decomposition numbers. We illustrate this on
the principal $\Phi_2$-block of the group $\mathrm{F}_4(q)$.

\begin{prop}
 Assume that $(q,6)=1$, $q\equiv-1\pmod\ell$ and $\ell > 11$. If
 Conjecture~\ref{conj:cwproj} holds, then the following matrix
 $$\begin{array}{c|ccccc}
    \phi_{9,10}		& 1\\
    \phi_{2,16}''	& 1& 1\\
    \phi_{2,16}'		& 1& .& 1\\
    B_2,\varepsilon	& 2& .& .& 1\\
    \phi_{1,24}		& 3& 2& 2& 4& 1\\
 \end{array}$$
 is a submatrix of the $\ell$-modular decomposition matrix of
 $\mathrm{F}_4(q)$. (Here, the "$\cdot$"s denote zero entries.)
\end{prop}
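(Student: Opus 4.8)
The plan is to extract the claimed submatrix from the structure provided by the $Q_w$'s for the principal $\Phi_2$-block $b$ of $\mathrm{F}_4(q)$ when $q\equiv-1\pmod\ell$, $(q,6)=1$, $\ell>11$. First I would fix a labelling of the unipotent characters in $b$; under these congruence conditions the principal $\Phi_2$-block contains the five unipotent characters $\phi_{9,10}$, $\phi_{2,16}''$, $\phi_{2,16}'$, $B_2,\varepsilon$, $\phi_{1,24}$ (together with non-unipotent characters, which we may ignore since we only make claims about the unipotent part of the decomposition matrix), and since $\ell>11$ is large the basic-set theorem of \S\ref{se:setting} applies, so $D_\uni$ is a well-defined invertible matrix with $5$ columns, one per simple unipotent $kG$-module. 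The entries are nonnegative integers, and by Geck's conjecture (which is known here, e.g. from \cite{Du13,DM13} or can be re-derived from the family decomposition via Proposition~\ref{prop:cwproj} above) we may assume $D_\uni$ is unitriangular with respect to a suitable ordering of rows and columns refining the $\leq_\LR$-order on families; the ordering displayed in the statement is exactly such a refinement, so the $1$'s on the diagonal are forced.

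Next I would compute, using \Chevie{}, the virtual projective characters $Q_w$ for $w$ ranging over the Weyl group of type $\mathrm{F}_4$, project them onto the block $b$ (i.e. compute $bQ_w$), and extract their unipotent parts. By Proposition~\ref{prop:cwchar0} and Conjecture~\ref{conj:cwproj} (which we assume), each $(-1)^{a(w)}bQ_w$ is, on its unipotent part, a genuine nonnegative-integer combination of the classes of the projective covers $P_N$ of the five simple modules in $b$; conversely, since the regular representation is uniform, every $P_N$ occurs in some $b\widetilde Q_w$. Among the $(-1)^{a(w)}bQ_w$ there will be several that are themselves indecomposable-looking, i.e. whose expansion in the (unknown) PIM basis has a single nonzero coefficient equal to $1$; these give exact columns of $D_\uni$. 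For the columns not pinned down exactly, each $(-1)^{a(w)}bQ_w$ gives a nonnegative combination $\sum_N m_{w,N}[P_N]$ with $\sum_N m_{w,N} c_{N,\rho}$ known (the coefficient of the unipotent character $\rho$ in $Q_w$), and one obtains upper bounds $c_{N,\rho}\le$ (relevant entry of the smallest $bQ_w$ involving $P_N$). Combining the upper bounds from all available $Q_w$'s with the lower bounds coming from the fact that $D_\uni D_\uni^{-1}=\mathrm{Id}$ forces certain entries to be at least as large as dictated by the already-determined rows (and with the constraint that $R_\chi\in\mathrm{span}\{Q_w\mid w\le_\LR\Gamma_\chi\}$, so that the decomposition numbers of the uniform almost characters are recovered exactly), I expect the five columns to be squeezed to the single matrix displayed.

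The main obstacle is that the squeezing need not be automatic: a priori the inequalities coming from the $Q_w$'s bound the entries only from above, and one must rule out smaller values. The key additional input is the uniformity of $R_\chi$ (and hence of the virtual characters $bR_\chi$) combined with the explicit Fourier-matrix decomposition of the non-uniform unipotent characters in the $\mathrm{F}_4$ family containing $B_2,\varepsilon$: writing out $R_{\phi_{1,24}}$, $R_{\phi_{2,16}'}$, $R_{\phi_{2,16}''}$, $R_{\phi_{9,10}}$ in terms of the five unipotent characters, and expressing each as an explicit integer combination of $Q_w$'s (possible by the span statement), one reads off that the inner products $\langle R_\chi, P_N\rangle$ are fixed integers; these pin down enough entries exactly, and the remaining freedom is killed by the requirement that all $5\times 5$ minors of the candidate decomposition matrix match the known character-theoretic data, in particular the values of the Steinberg-type projective (the column with top entry $\phi_{1,24}$) which is the unipotent part of a summand of a Gelfand--Graev module and whose character is therefore known exactly. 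Once every entry is both $\le$ and $\ge$ its claimed value, the displayed matrix is a genuine submatrix of $D_\uni$, which is what we wanted to prove. A final \Chevie{}-assisted consistency check — that the resulting $D_\uni$ is indeed invertible over $\ZZ$ and unitriangular — completes the argument.
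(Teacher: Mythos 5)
There is a genuine gap. You correctly identify the shape of the argument — use Conjecture~\ref{conj:cwproj} to turn each $(-1)^{a(w)}bQ_w$ into a nonnegative integer combination of PIM characters, which gives \emph{upper} bounds on decomposition numbers — and you correctly flag that this alone cannot pin down the entries, since nonnegativity only bounds from above. But your proposed source of \emph{lower} bounds does not hold up. Expressing the almost characters $R_\chi$ as integer combinations of $Q_w$'s gives relations among the (unknown) multiplicities, not fixed values of $\langle R_\chi, P_N\rangle$; the Fourier matrix determines $\langle R_\chi, \rho\rangle$ for unipotent characters $\rho$, which is input data, not a constraint on the decomposition matrix; and the Gelfand--Graev column $(0,\dots,0,1)^{\mathrm T}$ is already forced by unitriangularity and contributes nothing new. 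Invertibility of $D_{\mathrm{uni}}$ over $\ZZ$ is likewise automatic from unitriangularity and gives no lower bounds. None of these mechanisms rules out, say, a smaller value of the $(\phi_{1,24}, P_4)$ entry.

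What the paper actually uses for the lower bounds is an external, already-established partial result: by K\"ohler's thesis (cite{Koe06} in the paper) one knows a priori that the unknown entries $f,g,h,i,j$ of the displayed submatrix satisfy $f,h,i\geq 2$, $g\geq 4f-5$, $j\geq 4$. The paper then computes, for the single well-chosen element $w = s_2s_3s_2s_3s_4s_3s_2s_1s_3s_2s_4s_3s_2s_1$, that $bQ_w = 32\phi_{9,10}+72\phi_{2,16}''+72\phi_{2,16}'+72\,B_2{,}\varepsilon+288\phi_{1,24}$, rewrites it in the PIM basis as $32P_1+40P_2+40P_3+(72-32f)P_4+(288-32g-40h-40i-(72-32f)j)P_5$, and invokes the conjecture to force the last two coefficients to be nonnegative. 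Combined with K\"ohler's inequalities this is tight: $72-32f\ge 0$ with $f\ge 2$ gives $f=2$, hence $g\ge 3$, and then $288-32g-40h-40i-8j\ge 0$ with $g\ge 3$, $h,i\ge 2$, $j\ge 4$ forces equality throughout. Your plan omits the K\"ohler input entirely and does not perform the explicit \Chevie{} computation that makes the inequalities saturate, so it does not close the argument.
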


\begin{proof}
By \cite{Koe06}, there exist integers $f,g,h,i,j$ with
$f,h,i \geq 2$, $g\geq 4f-5$ and $j \geq 4$ such that the matrix
  $$\begin{array}{c|ccccc}
    \phi_{9,10}		& 1\\
    \phi_{2,16}''	& 1& 1\\
    \phi_{2,16}'		& 1& .& 1\\
    B_2,\varepsilon	& f& .& .& 1\\
    \phi_{1,24}		& g& h& i& j& 1\\
  \end{array}$$
is a submatrix of the decomposition matrix of $\mathrm{F}_4(q)$.
Let $b$ be the block idempotent associated with the principal
$\ell$-block of $\mathrm{F}_4(q)$. With
$w = s_2s_3s_2s_3s_4s_3s_2s_1s_3s_2s_4s_3s_2s_1$ we have
$$bQ_w = 32 \phi_{9,10} + 72\phi_{2,16}'' + 72 \phi_{2,16}'
         +72 B_2,\varepsilon  +288 \phi_{1,24}.$$
If we denote by $P_1,\ldots,P_5$ the unipotent part of the characters of the
PIMs corresponding to the last five columns of the decomposition matrix of
$\mathrm{F}_4(q)$, we can decompose $Q_w$ as
$$bQ_w = 32P_1 + 40P_2 + 40P_3 + (72-32f)P_4+(288-32g-40h-40i-(70-32f)j)P_5.$$
By Conjecture~\ref{conj:cwproj} the multiplicity of each $P_k$ should be
nonnegative, that is
$72-32f \geq 0$ and $288-32g-40h-40i-(70-32f)j \geq 0$. Since $f\geq 2$
the first relation forces $f=2$ (and therefore $g \geq 3$). The second becomes
$288-32g-40h-40i-8j \geq 0$.
 Since $288 = 32\time 3 + 40 \times 2 + 40\times 2
+8\times 4$ is the minimal value that the expression $32g+40h+40i+8j$ can
take, we deduce that $h=i=2$, $g=3$ and $j=4$.
\end{proof}

%%%%%%%%%%%%%%%%%%%%%%%%%%%%%%%%%%%%%%%%%%%%%%%%%%%%%%%%%%%%%%%%%%%%%%%%%%%%%%%
\section{Some evidence}   \label{se:examples}

\subsection{A cuspidal module in the unitary group}
We give here a non-trivial example for $\SU_n(q)$ where
Conjecture~\ref{conj:cwproj} holds.
The key point is to find a formula for $(C_w F)_{|v=1}$ in terms of
well-identified elements of $W$. This is done using the geometric description
of Kazhdan--Lusztig polynomials. The proof given here can be adapted to other
groups, even when the Schubert variety is no longer smooth, using
Bott--Samelson varieties instead.
\par
Recall that the set of unipotent characters of $\SU_n(q)$ is parametrized
by partitions of $n$. Given such a partition $\lambda$, we denote
by $\rho_\la$ (resp. $\chi_\la$) the corresponding unipotent character
(resp. character of $\fS_n$), with the convention that $\rho_{1^n}$ is
the Steinberg character.

\begin{prop}   \label{prop:subreg}
 Let $\ell > n$ be a prime dividing $q+1$. Then
 Conjecture~\ref{conj:cwproj} holds for $\SU_n(q)$ and $w = s_1 w_0 \in \fS_n$.
 Furthermore,
 $$\frac{(-1)^{a(w)}}{(n-1)!} Q_{w} = \rho_{21^{n-2}} +(n-1)\rho_{1^n}$$
 is the unipotent part of a projective indecomposable $k\SU_n(q)$-module.
\end{prop}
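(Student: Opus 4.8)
The plan is to compute $(C_w F)_{|v=1}$ explicitly for $w = s_1 w_0$ and then to identify the resulting virtual character $Q_w$ with the unipotent part of a specific projective module. First I would use the geometric description of Kazhdan--Lusztig polynomials: since $w_0$ corresponds to the full flag variety $\bG/\bB$, which is smooth, and $s_1 w_0$ differs from $w_0$ by a single simple reflection, the relevant Schubert variety (a codimension-one Schubert divisor in $\bG/\bB$) is smooth, so all the associated Kazhdan--Lusztig polynomials are trivial. Concretely, I expect $C_{s_1 w_0}' = \sum_{y} v^{\ell(w) - \ell(y)} t_y$ where $y$ ranges over $\{y : y \leq s_1 w_0\}$, and applying the twisted version together with the specialization $v = 1$ and the action of $F$ (which for $\SU_n$ is the nontrivial graph automorphism of $\fS_n$), I would obtain a clean formula for $(C_w F)_{|v=1}$ as a signed sum over Weyl group elements. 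Feeding this through Theorem~\ref{thm:dlvar} and the dualization identity $(\widetilde\chi \otimes \widetilde\sgn)_v = \widetilde\chi_v \circ \iota$ from \S\ref{sec:conjecture} would yield $Q_w$ as an explicit combination of almost characters $R_{\widetilde\chi}$.

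Next I would expand the almost characters in terms of genuine unipotent characters using Lusztig's Theorem 4.23 of \cite{Lu84}. For $\SU_n(q)$ the Fourier matrix is explicit (and is essentially trivial for partitions with only one or two parts not equal to $1$), so the only unipotent characters that can appear among the constituents of $Q_w$ are $\rho_{1^n} = \St$ and $\rho_{21^{n-2}}$ — their labelling partitions $1^n$ and $21^{n-2}$ are conjugate under the graph automorphism to $n$ and $(n-1)1$ respectively, so they sit in the "first few" two-sided cells that $w = s_1 w_0$ can reach. Tracking the $F^\delta$-eigenvalues (all of the form $q^{\delta m/2}$ here, since these characters have trivial Frobenius eigenvalue $\lambda_\rho = 1$), one checks that everything lies in a single generalized eigenspace, so $Q_w[\lambda] = Q_w$ for the relevant $\lambda$. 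Carrying out the bookkeeping I would arrive at the asserted identity $(-1)^{a(w)}/(n-1)!\, Q_w = \rho_{21^{n-2}} + (n-1)\rho_{1^n}$, with the combinatorial factor $(n-1)!$ coming from the stabilizer data in the almost-character formula.

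The last step is to show that $\rho_{21^{n-2}} + (n-1)\rho_{1^n}$ is the unipotent part of a projective indecomposable $k\SU_n(q)$-module, under the hypothesis $\ell > n$, $\ell \mid q+1$. Since $\ell \mid q+1$ and $\ell > n$, the group $\SU_n(q)$ has cyclic Sylow $\ell$-subgroups controlled by the Ennola-dual of a $\Phi_1$-situation in $\mathrm{GL}_n$; the principal block (containing $\St$ and $\rho_{21^{n-2}}$) has cyclic defect, and its Brauer tree is known — it is a line with the Steinberg at one end. Reading off the projective cover $P$ of the simple module at the end of the tree labelled by $\rho_{21^{n-2}}$ (or using the known structure of the Gelfand--Graev module, in which $\St$ appears with multiplicity one and $\rho_{21^{n-2}}$ governs the adjacent edge), one sees its unipotent part is exactly $\rho_{21^{n-2}} + (n-1)\rho_{1^n}$ — the multiplicity $n-1$ being $[\SU_n(q) : \text{parabolic}]$ evaluated $\ell$-adically, or equivalently the relevant entry of the decomposition matrix forced by the tree together with the degree of $\rho_{21^{n-2}}$.

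The main obstacle I expect is the second step: correctly extracting the coefficient $(n-1)!$ and confirming that no further unipotent characters enter. This requires a careful analysis of which Weyl group elements $y \leq s_1 w_0$ contribute after twisting by $F$ and specializing, and then matching the resulting sum of $R_{\widetilde\chi}$ against the genuine unipotent characters via the Fourier transform. The potential pitfall is that $s_1 w_0$ is not in the lowest nontrivial two-sided cell, so a priori it could reach several cells; one must verify that the almost-character expansion collapses so that only the two claimed characters survive (with the stated multiplicities), and that the $F^\delta$-eigenspace decomposition does not split the answer. Once the formula for $(C_w F)_{|v=1}$ is in hand this becomes a finite, if delicate, computation in the character ring of $\fS_n$, which is why I would organize the proof around pinning that formula down first.
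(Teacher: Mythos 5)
Your computation of $Q_w$ tracks the paper's own argument closely: both proofs exploit the smoothness of the Schubert variety $\overline{\bB s_1 w_0 \bB/\bB}$ (a codimension-one divisor, being the preimage under $\bG/\bB\to\bG/\bP$ of a smooth Schubert variety in $\bP_{n-1}$) to get trivial Kazhdan--Lusztig polynomials; both then pass through $\iota$, the specialization $v=1$, the $F$-twist via $\widetilde\chi(xF)=(-1)^{a(\chi)}\chi(xw_0)$, and the collapse of the almost-character expansion to just $\rho_{1^n}$ and $\rho_{21^{n-2}}$. The paper packages this as Lemma~\ref{lem:cw}, writing $C_{s_1w_0}'=v^{-1}C_{w_0}'-v^{n-2}t_{s_1\cdots s_{n-1}}C_{w_I}'$, from which the restriction to constituents of $\Ind_{W_{F(I)}}^W\sgn_{F(I)}$ and the factor $|W_I|=(n-1)!$ fall out cleanly, but your unpacking is substantively the same. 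Your remark about the $F^\delta$-eigenspace not splitting the answer is a reasonable observation, consistent with the Proposition's claim about $Q_w$ itself rather than its individual $\lambda$-components.

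There is, however, a genuine error in your final step. You assert that for $\ell>n$ dividing $q+1$ the Sylow $\ell$-subgroup of $\SU_n(q)$ is cyclic, so the principal block has cyclic defect and the projective cover can be read off a Brauer tree. This is false for $n\geq 3$. Under $\ell\mid q+1$ and $\ell>n$, every factor $q^i-(-1)^i$ for $2\leq i\leq n$ contributes exactly one factor $\ell^a$ (where $\ell^a\|q+1$): for $i$ odd, $q^i+1=(q+1)(q^{i-1}-\cdots+1)$ and the cofactor is $\equiv i\not\equiv 0\bmod\ell$; for $i$ even, $q^i-1=(q^2-1)(q^{i-2}+\cdots+1)$ and the cofactor is $\equiv i/2\not\equiv 0\bmod\ell$. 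Thus $|\SU_n(q)|_\ell=\ell^{a(n-1)}$ and the Sylow $\ell$-subgroup has rank $n-1$; it is abelian but not cyclic once $n\geq 3$, so there is no Brauer tree and your argument does not apply. The paper instead cites a specific result, \cite[Thm.~5.9]{DM13}, which establishes directly that $\rho_{21^{n-2}}+(n-1)\rho_{1^n}$ is the unipotent part of a projective indecomposable for $\SU_n(q)$ in this $\Phi_2$-situation. That input is not a consequence of cyclic-defect theory, and some replacement for it (e.g.\ a Gelfand--Graev or generalized Gelfand--Graev argument, which you gesture at but do not develop) would be needed to close this step.
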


Here $w_0$ is the longest element of $W \cong \fS_n$ and $s_i$
is the transposition $(i,i+1)$. For computing $Q_w$ for $w=s_1w_0$
we first need to compute the decomposition of the corresponding
Kazhdan--Lusztig element on the standard basis:

\begin{lem}   \label{lem:cw}
 If $w_0$ is the longest element of $W$ and $w_I$ the longest element of
 $W_I$ with $I = \{s_1,\ldots,s_{n-2}\}$ we have
 $$C_{s_1 w_0}'
   =  v^{-1} C_{w_0}' - v^{n-2} t_{s_1 s_2 \cdots s_{n-1}}C_{w_I}'.$$
\end{lem}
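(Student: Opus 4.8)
The plan is to compute $C'_{s_1w_0}$ by applying the multiplication rule for the Kazhdan--Lusztig basis by $C'_{s_1}=t_{s_1}+v$. Recall that $C'_{s_1}C'_{w_0}=(v+v^{-1})C'_{w_0}$ since $s_1w_0<w_0$ (because $s_1\in S$ and $w_0$ is the longest element), so $t_{s_1}C'_{w_0}=v^{-1}C'_{w_0}-vC'_{w_0}=\ldots$ — more precisely $C'_{s_1}C'_{w_0}=(v+v^{-1})C'_{w_0}$ gives $t_{s_1}C'_{w_0}=v^{-1}C'_{w_0}$. That alone is not enough; instead I would work from the other side. Since $\ell(s_1w_0)=\ell(w_0)-1$, there is a standard recursion expressing $C'_{s_1w_0}$ via $C'_{s_1}$ and lower terms. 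Concretely, one writes $C'_{s_1}C'_{s_1w_0}=(v+v^{-1})C'_{s_1w_0}$ if $s_1(s_1w_0)=w_0>s_1w_0$ is false — in fact $s_1\cdot(s_1w_0)=w_0>s_1w_0$, so $s_1$ takes $s_1w_0$ up, and the multiplication rule reads
$$
C'_{s_1}C'_{s_1w_0}=C'_{w_0}+\sum_{\substack{z<s_1w_0\\ s_1z<z}}\mu(z,s_1w_0)\,C'_z .
$$
Since $\ell(s_1 w_0)=\ell(w_0)-1$, the only $z<s_1w_0$ with $s_1z<z$ and $\mu(z,s_1w_0)\neq0$ that can appear are those of length $\ell(s_1w_0)-1$; for the pair $(s_1w_0,z)$ in this range $\mu$ is the coefficient of $v$ in the relevant Kazhdan--Lusztig polynomial.

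Rather than chasing $\mu$-values abstractly, I would exploit the special shape of $I=\{s_1,\dots,s_{n-2}\}$: the element $w_0$ has a reduced decomposition of the form $w_0=(s_1s_2\cdots s_{n-1})\,w_I$ (a minimal coset representative for $W_I\backslash W$ times $w_I$ reversed, or the analogous statement on the correct side), so that $s_1w_0=s_1(s_1s_2\cdots s_{n-1})w_I=(s_2\cdots s_{n-1})w_I$. The key identity I would establish is then the geometric one: the Schubert variety $\overline{\rX}_{w_0}=\bG/\bB$ is smooth, as is $\overline{\rX}_{w_I}$ inside the parabolic $\bP_I/\bB$, and $\overline{\rX}_{s_1w_0}$ fibres over $\bP_1/\bB\cong\bP_1$ with fibre a Schubert variety isomorphic to $\overline{\rX}_{w_I}$ translated by $s_1s_2\cdots s_{n-1}$. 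Smoothness of these Schubert varieties forces the Kazhdan--Lusztig polynomials $P_{x,w_0}$ and $P_{x,w_I}$ to be identically $1$, which pins down $C'_{w_0}=\sum_{x\le w_0}v^{\ell(w_0)-\ell(x)}t_x$ and $C'_{w_I}=\sum_{x\le w_I}v^{\ell(w_I)-\ell(x)}t_x$ explicitly as sums over intervals. Then both sides of the claimed identity
$$
C'_{s_1w_0}=v^{-1}C'_{w_0}-v^{n-2}\,t_{s_1s_2\cdots s_{n-1}}C'_{w_I}
$$
are honest $\ZZ[v,v^{-1}]$-combinations of standard basis elements, and I would verify it by comparing coefficients: on the right, the term $v^{-1}C'_{w_0}$ supplies $\sum_{x\le w_0}v^{\ell(w_0)-\ell(x)-1}t_x$, while $v^{n-2}t_{s_1\cdots s_{n-1}}C'_{w_I}=v^{n-2}\sum_{y\le w_I}v^{\ell(w_I)-\ell(y)}t_{s_1\cdots s_{n-1}}t_y$; here $\ell(s_1\cdots s_{n-1})+\ell(y)=\ell(s_1\cdots s_{n-1}\,y)$ for $y\le w_I$ because $s_1\cdots s_{n-1}$ is the minimal-length representative and lengths add, so $t_{s_1\cdots s_{n-1}}t_y=t_{s_1\cdots s_{n-1}y}$ with no lower-order corrections.

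Carrying this out, the right-hand side becomes $\sum_{x}\big(v^{\ell(w_0)-\ell(x)-1}-[x\ge s_1\cdots s_{n-1}]\,v^{n-2+\ell(w_I)-\ell((s_1\cdots s_{n-1})^{-1}x)}\big)t_x$; using $\ell(w_0)=\ell(w_I)+n-1$ and $\ell((s_1\cdots s_{n-1})^{-1}x)=\ell(x)-(n-1)$ on the subtracted part, the two exponents of $v$ coincide, so the subtracted term cancels exactly the part of $v^{-1}C'_{w_0}$ supported on $\{x:x\ge s_1\cdots s_{n-1}\}$, leaving $\sum_{x\le w_0,\ x\not\ge s_1\cdots s_{n-1}}v^{\ell(w_0)-\ell(x)-1}t_x$. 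Finally I would identify this support set with the Bruhat interval $\{x:x\le s_1w_0\}$ and the exponent $\ell(w_0)-\ell(x)-1$ with $\ell(s_1w_0)-\ell(x)$, which together with $P_{x,s_1w_0}\equiv1$ (again smoothness, since $\overline{\rX}_{s_1w_0}$ is a $\bP_1$-bundle over a smooth base hence smooth) gives exactly $C'_{s_1w_0}$. The main obstacle I anticipate is the combinatorial bookkeeping in the last step: proving that $\{x\le w_0:x\not\ge s_1s_2\cdots s_{n-1}\}$ equals $\{x:x\le s_1w_0\}$ and that the length shift matches, which amounts to a careful Bruhat-order argument using that $w_0=(s_1\cdots s_{n-1})w_I$ is a length-additive factorization through the parabolic quotient $W_I\backslash W$ — everything else is forced once the Kazhdan--Lusztig polynomials are known to be trivial by smoothness of the relevant (iterated $\bP_1$-bundle) Schubert varieties.
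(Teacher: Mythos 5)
Your plan follows the same route as the paper: use smoothness of the relevant Schubert varieties to replace all three Kazhdan--Lusztig elements by explicit interval sums $\sum v^{\ell(\cdot)-\ell(x)}t_x$, exploit the length-additive factorization $w_0=(s_1\cdots s_{n-1})w_I$ through $W_I$, and match the two sides term by term. However, the step where you assert that ``the two exponents of $v$ coincide'' is false as written and hides the key discrepancy. Carrying out your own substitutions $\ell(w_0)=\ell(w_I)+n-1$ and $\ell((s_1\cdots s_{n-1})^{-1}x)=\ell(x)-(n-1)$ gives, for $x=s_1\cdots s_{n-1}y$,
$$
\ell(w_0)-\ell(x)-1\,=\,\ell(w_I)+n-2-\ell(x),
\qquad
n-2+\ell(w_I)-\ell((s_1\cdots s_{n-1})^{-1}x)\,=\,\ell(w_I)+2n-3-\ell(x),
$$
and these differ by $n-1$, so the subtracted term does \emph{not} cancel the corresponding part of $v^{-1}C'_{w_0}$. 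A check at $n=3$ (where $s_1w_0=s_2s_1$, $w_I=s_1$) shows concretely that $v^{-1}C'_{w_0}-v\,t_{s_1s_2}C'_{s_1}$ has a spurious $t_{s_1s_2s_1}$ term and does not equal $C'_{s_2s_1}$. The cancellation you describe works precisely when the coefficient in front of $t_{s_1\cdots s_{n-1}}C'_{w_I}$ is $v^{-1}$, not $v^{n-2}$; indeed the paper's own proof produces $C'_{s_1w_0}=v^{-1}\big(C'_{w_0}-t_{s_1\cdots s_{n-1}}C'_{w_I}\big)$, so the displayed statement of the lemma contains a typo ($v^{n-2}$ should read $v^{-1}$), which your computation should have flagged rather than papered over. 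Separately, your geometric picture of $\overline{\bB s_1w_0\bB/\bB}$ as a bundle ``over $\bP_1/\bB\cong\bP_1$ with fibre $\overline{\rX}_{w_I}$'' has the wrong base and, for $n>3$, the wrong dimension: as in the paper, it is $\pi^{-1}\big(\overline{\bB s_2\cdots s_{n-1}\bP_I/\bP_I}\big)$, a bundle over $\bP_{n-2}\subset\bG/\bP_I\cong\bP_{n-1}$ with fibre the full flag variety $\bP_I/\bB$. The conclusion (smoothness, hence trivial Kazhdan--Lusztig polynomials) is still correct, but the justification needs to be repaired.
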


\begin{proof}[Proof of the lemma]
Let $\bB$ be a Borel subgroup of $\bG=\SL_n$ and $\bP$ be the standard
parabolic subgroup corresponding to the set of simple reflections
$I = \{s_1,s_2,\ldots,s_{n-2}\}$. Let $\pi : \bG/\bB \longrightarrow \bG/\bP$
be the canonical projection. The variety $\bG/\bP$ is isomorphic to the
projective space $\bP_{n-1}$ and is paved by the affine spaces
$\bB s_i s_{i+1} \ldots s_{n-1} \bP /\bP$ of dimension $n-i$.
The closure of each of them is in turn a projective space of
dimension $n-i$, and hence is smooth.

\smallskip

Each element $w \in W$ can be written uniquely as
$w = s_i s_{i+1} \ldots s_{n-1} x$ with $1 \leq i \leq n$ and $x \in W_I$.
In particular, the image of the corresponding Schubert cell under $\pi$ is
exactly $\bB s_i s_{i+1} \ldots s_{n-1} \bP / \bP$. We deduce that
$\overline{\bB s_1 w_0 \bB / \bB}
  = \pi^{-1} \big(\overline{ \bB s_2 s_{3} \ldots s_{n-1} \bP / \bP}\big)$.
In particular, it is smooth and by \cite[Thm.~A2]{KL79} the Kazhdan--Lusztig
element is given by
$$ C_{s_1 w_0}' = \sum_{w \leq s_1 w_0} v^{\ell(s_1 w_0)-\ell(w)} t_w
  = v^{-1} \Big(\sum_{w \in W} v^{\ell(w_0)-\ell(w)} t_w
       - \sum_{w \nleq s_1 w_0} v^{\ell(w_0)-\ell(w)} t_w\Big).$$
Now $w \nleq s_1 w_0$ if and only if $w = s_1 s_2 \cdots s_{n-1} x$ with
$x \in W_I$. In that case one can write $t_w = t_{s_1 \cdots s_{n-1}} t_x$ and
the result follows from the relation $\ell(w)-\ell(w_I) = n-1$ and the
expression for $C_v'$ when $v$ is the longest element of a parabolic subgroup
(which again comes from the smoothness of $\overline{\bB w_I \bB/\bB}
= \bP_I/\bB$).
\end{proof}

\begin{proof}[Proof of Proposition~\ref{prop:subreg}]
By definition,
$$Q_w = \sum_{i\in\ZZ}(-1)^i\big[D_G\big(IH^i(\overline{\rX(w)})\big)\big]_{|v=1}
      = \sum_{\chi \in (\Irr\, W)^F} \widetilde \chi_v(C_w F)_{|v=1} R_{\widetilde\chi}.$$
Now using the involution $\iota:t_w \longmapsto (-1)^{\ell(w)} t_{w^{-1}}^{-1}$
of $\cH_v(W)$ one has $\iota(C_w') = (-1)^{\ell(w)} C_w$, so that from
Lemma~\ref{lem:cw} we get
$C_{s_1 w_0} =  -v^{-1} C_{w_0} + v^{n-2} t_{s_{n-1} \cdots s_1}^{-1} C_{w_I}$.
The evaluation at $v=1$ yields
$$\begin{array}{r@{\ \, = \ \,}l}
  (C_{s_1w_0})_{|v=1} &  \displaystyle -\sum_{w\in W} (-1)^{\ell(w_0)-\ell(w)} w
   + s_1 \cdots s_{n-1} \sum_{w \in W_I} (-1)^{\ell(w_I )- \ell(w)} w \\[15pt]
  & (-1)^{\ell(w_0)+1}|W| e_\sgn + (-1)^{\ell(w_I)} |W_I| s_1 \cdots s_{n-1} e_{\sgn_I}
\end{array}$$
where $\sgn$ (resp. $\sgn_{I}$) is the sign character of $W$ (resp. $W_I$) and
$e_\chi$ denotes the central idempotent corresponding to the character $\chi$.
Recall from \cite[17.2]{Lu86}, that the extension $\widetilde \chi$ of $\chi$
satisfies $\widetilde \chi(wF) = (-1)^{a(\chi)} \chi(ww_0)$. Using the fact
that $w_0 = s_{n-1} \cdots s_1 w_{F(I)}$ we obtain
$$\begin{array}{r@{\ \, = \ \,}l}
  (-1)^{a(\chi)} \widetilde \chi_v (C_{s_1 w_0} F)_{|v=1}
  & (-1)^{\ell(w_0)+1} |W| \chi(e_{\sgn}w_0)
  + (-1)^{\ell(w_I)} |W_I| \chi\big(w_{F(I)} e_{\sgn_{F(I)}}\big)\\[8pt]
  & -|W| \chi(e_{\sgn}) +  |W_I| \chi \big(e_{\sgn_{F(I)}} \big).
\end{array}$$
Since $\chi\big(e_{\sgn_{F(I)}}) = (\Res_{W_{F(I)}}^W \chi)(e_{\sgn_{F(I)}})$,
we deduce that it is zero whenever $\chi$ is not a constituent of
$\Ind_{W_{F(I)}}^W \sgn_{F(I)}$. The two constituents of this induced
representation correspond to the partitions $1^n$ and $21^{n-2}$, with
respective $A$-functions given by $\ell(w_0)$ and $\ell(w_0)-1$. Now using
the fact that $R_{\widetilde \chi_\mu}= (-1)^{a(\mu)+A(\mu)} \rho_{\mu}$ for
any partition $\mu \vdash n$, one finds
$$Q_{s_1 w_0}
   = (-1)^{\ell(w_0)-1}|W_I| \big(\rho_{2 1^{n-2}}+ (n-1)\rho_{1^{n}}\big),$$
and $\rho_{2 1^{n-2}}+ (n-1)\rho_{1^{n}}$ is the unipotent part of the
character of an indecomposable projective module by \cite[Thm.~5.9]{DM13}.
\end{proof}

\subsection{Groups of small rank} We finish by computing for several groups $G$
of small rank the contribution to the principal $\ell$-block $b$ by various
$Q_w$'s.
In the table, we give in the second column the minimal integer $d$ such that
$\ell \mid (q^d-1)$, and in the last column the decomposition
of $Q_w$ in the basis of projective indecomposable modules which is
obtained from the decomposition matrices in \cite{HN13,Du13,DM13,HL98}.
Note however that this might change when $\ell$ is small as the decomposition
matrix will change (see for example $\mathrm{G}_2(4)$ with $\ell = 5$).
In any case, $Q_w$ remains the unipotent part of a proper projective character.
Our notation for the unipotent characters is as in the cited sources.

$$\begin{array}{c|c|c|c|c}
G 			& d & w & \la & bQ_w[\la] \\[3pt]\hline
\Sp_6(q)\vphantom{\Big)}& 2 & s_1 s_2 s_1s_2s_3& 1 & 3(\rho_{1,1^2}+\St)
				+ (B_2,1^2 + 2\St) + (\rho_{1^3,-}+3\St) \\
\textrm{G}_2(q)		& 2 & s_1s_2 & -1 & G_2[-1]+2\St\\[4pt]
\tw3\textrm{D}_4(q)	& 6 & s_1s_2s_3s_2 & 1& 2(\phi_{2,2}+\phi_{1,3}'+\St) + 2(\tw3\textrm{D}_4[-1]+2\St)\\[4pt]
\SU_6(q)			& 2& s_1 s_3 s_4 s_3& 1 & 2 ( \rho_{321} + 2 \rho_{2^3} + 2 \rho_{3 1^3}
 			+ 2 \rho_{2^21^2} + 2 \rho_{21^4} + 6 \rho_{1^6})\\[4pt]
\mathrm{F}_4(q)		&12&s_1s_2s_1s_3s_2s_1s_3s_2s_4s_3s_2s_3\!
				& \zeta_{12}^5 & 2F_4[\theta]+20B_2,\St+ 4\St\\
			&  &    & \zeta_{12}^4 & 4F_4[\theta]+F_4[\mathrm{i}]+11B_2,\St+ 10\St\\[4pt]
\end{array}$$

One can even construct larger examples for which the conjecture holds. We
give below five examples of computation of $Q_w$ in $\SU_{10}(q)$ when
$q\equiv-1\pmod\ell$. When
$\ell > 17$, we can use \cite[Thm.~6.2]{DM13} to decompose them
on the basis of projective indecomposable modules. It turns out that at least
four of them are (up to a scalar) the character of a projective
indecomposable module.
$$\begin{array}{c|c}
  w & Q_w \\\hline
  \vphantom{\Big)}
  s_1s_3s_4s_3s_5s_4s_3s_6s_5s_4s_3s_7s_6s_5s_4s_3s_9s_8s_7s_6s_5s_4s_3&
  		1440\Psi_{321^5} \\
  s_1s_2s_1s_4s_5s_4s_6s_5s_4s_7s_6s_5s_4s_8s_7s_6s_5s_4&
  		96\Psi_{32^21^3} \\[4pt]
  s_5s_4s_3s_2s_1s_4s_3s_2s_4s_3s_4s_9s_8s_7s_9s_8s_9&
  		576\Psi_{32^31} \\[4pt]
  s_1s_2s_1s_4s_5s_4s_6s_5s_4s_7s_6s_5s_4s_9&
  		96\Psi_{3^221^2} \\[4pt]
  s_2s_1s_2s_3s_7s_6s_5s_7s_6s_7s_9&
  		16\big(\Psi_{4321}+(1-\alpha)\Psi_{32^31} + \beta\Psi_{2^41^2}\big)\\
\end{array}$$
Here $\alpha,\beta \in \{0,1\}$ are as in \cite[Thm.~6.2]{DM13}

\medskip
\noindent{\bf Acknowledgement:}
We would like to thank Jean Michel for helping us to compute the characters
of the modules $Q_w$ in \Chevie\ \cite{MChv}, especially for $\SU_{10}(q)$.

%%%%%%%%%%%%%%%%%%%%%%%%%%%%%%%%%%%%%%%%%%%%%%%%%%%%%%%%%%%%%%%%%%%%%%%%%

\end{document}